\newtheorem{theorem}{Theorem}
\newtheorem{corollary}[theorem]{Corollary}
\newtheorem{definition}[theorem]{Definition}
\newtheorem{lemma}[theorem]{Lemma}
\newenvironment{proof}[1][Proof]{\noindent\textbf{#1.} }{\ \rule{0.5em}{0.5em}}
\def\lab(#1)#2{\put(#1){\makebox(0,0)[c]{#2}}}
\journal{arXiv}
\begin{document}

\begin{frontmatter}

\title{The $2$-connected bottleneck Steiner network problem is NP-hard in any $\ell_p$ plane}

\author[]{M.~Brazil}

\author[]{C.J.~Ras\corref{cor2}}

\author[]{D.A.~Thomas}

\author[]{G.~Xu}


\begin{abstract}
Bottleneck Steiner networks model energy consumption in wireless ad-hoc networks. The task is to design a network spanning a given set of terminals and at most $k$ Steiner points such that the length of the longest edge is minimised. The problem has been extensively studied for the case where an optimal solution is a tree in the Euclidean plane. However, in order to model a wider range of applications, including fault-tolerant networks, it is necessary to consider multi-connectivity constraints for networks embedded in more general metrics. We show that the $2$-connected bottleneck Steiner network problem is NP-hard in any planar $p$-norm and, in fact, if P$\,\neq\,$NP then an optimal solution cannot be approximated to within a ratio of ${2}^\frac{1}{p}-\epsilon$ in polynomial time for any $\epsilon >0$ and $1\leq p< \infty$.

\end{abstract}

\end{frontmatter}

\section{Introduction}
A wireless sensor network (WSN) consists of autonomous and spatially distributed sensing devices that are deployed in diverse environments to collect information and monitor physical conditions, before forwarding the data via multi-hop paths to a base station for processing. An abundance of applications for WSNs (see, eg., \cite{aram,simon,wark}) has fuelled interest in every aspect of the design, function, and deployment of these networks. 

The lifetime of a sensor network (defined as time until network partition) is dependant on the nodes which consume the most power. In turn, the nodes which consume the most power are the nodes that transmit over the largest distances. The problem of designing networks that minimise the length of the longest edge (the bottleneck) is therefore of fundamental importance.

An appropriate model for the WSN lifetime optimisation problem is the geometric bottleneck Steiner network problem, which has been studied extensively for the case where only $1$-connectivity is required; in other words, the constructed networks are trees \cite{abu,bae1,chang,wan}. The so-called \textit{bottleneck Steiner tree problem} was shown in \cite{wan} to be inapproximable to within ratios of less than $\sqrt{2}$ and $2$ in the Euclidean plane and rectilinear plane, respectively. In \cite{wan} Wang and Du also provide a simple heuristic called the ``beaded spanning tree heuristic", which greedily places degree-$2$ Steiner points on the longest edges of a minimum spanning tree. Wang and Du show that their heuristic is at most a $2$-approximation in both the rectilinear and Euclidean planes. Li et al. \cite{li} provide a $\sqrt{3}$-approximation algorithm for the Euclidean bottleneck Steiner tree problem based on a heuristic for finding minimum spanning trees in $3$-regular hypergraphs.


In reality $1$-connectivity is not enough. Survivability is of paramount importance in wireless ad-hoc networks. The nodes of these networks are generally battery powered, and therefore a higher degree of connectivity is required in order to ensure continued function after node depletion. Only three papers have looked at survivable bottleneck Steiner networks: in \cite{brazil2} and \cite{brazil3}, Brazil et al. show that the $2$-connected bottleneck Steiner network problem can be solved in polynomial time when $k$, the number of Steiner points, is constant. In \cite{ras}, Ras uses techniques based on generalised Voronoi diagrams to provide an exact algorithm for the bottleneck Steiner network problem under a very general definition of multi-connectivity. The computational complexity of the $2$-connected bottleneck Steiner network problem when $k$ is part of the input has been an open question until now.

In this paper we demonstrate that the $2$-connected bottleneck Steiner network problem is NP-hard and cannot (unless P=NP) be efficiently approximated to within a ratio of less than $2^\frac{1}{p}$  in planar $\ell_p$ norms (also called $p$-\textit{norms}) when $1\leq p<\infty$. For $p=\infty$, this implies an inapproximability ratio of $2$, since the $\ell_\infty$-plane is simply a $45^\circ$ rotation of the $\ell_1$-plane.

\section{Preliminaries}

We study a formal model of the problem, defined as follows. Given a set $X$ of $n$ points in the plane (called \textit{terminals}) and a positive integer $k$, the \textit{$2$-connected bottleneck $k$-Steiner network problem} asks for network $N$ of minimum \textit{bottleneck} (longest edge) length such that $N$ spans $X$ and at most $k$ additional points, and for every pair of nodes $u,v$ in $X$ the number of internally node-disjoint paths connecting $u$ and $v$ in $N$ is at least $2$. Length is measured in the $p$-{norm}, which, for any vector $e=(x,y)\in\mathbb{R}^2$, we denote as $\|e\|_p:=(\vert x\vert^p+\vert y\vert^p)^{\frac{1}{p}}$ Network $N$ is called a \textit{minimum $2$-connected bottleneck $k$-Steiner network}.



We first state a number of definitions and preliminary results.

\begin{definition}The unit circle of the $p$-norm is the set of points $\{e\in\mathbb{R}^2\mathrm{\ s.t.\ }\|e\|_p=1\}$.
\end{definition}

It is easy to show that the unit circle of the $p$-norm 
has $90^\circ$ rotational symmetry.




\begin{definition}A full Steiner tree of a Steiner network $N$ is subtree $T$ of $N$ such that every terminal is of degree $1$ in $T$ and every Steiner point is of the same degree in $T$ as it is in $N$.
\end{definition}

Note that an edge connecting two terminals is a full Steiner tree according to the above definition.

\begin{lemma}[\cite{luebke}]\label{lemLu}There exists a minimum $2$-connected bottleneck $k$-Steiner network $N$ on $X$ such that the edge-set of $N$ can be partitioned into full Steiner trees.
\end{lemma}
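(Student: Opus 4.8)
The plan is to first recast the conclusion as a purely combinatorial condition on $N$ and then extract a network satisfying it from a suitably chosen minimum network by an exchange argument. Call a cycle of $N$ \emph{short} if it passes through at most one terminal. I claim that $E(N)$ admits a partition into full Steiner trees if and only if $N$ has no short cycle. In one direction, suppose such a partition exists and $Z$ is a cycle meeting at most one terminal; since all edges at a Steiner point must lie in the same tree of the partition, walking around $Z$ forces every two consecutive edges into a common tree except possibly the pair meeting at $Z$'s single terminal (if any) --- but those two are forced in as well, via their neighbours. Hence all edges of $Z$ lie in a single tree $T_i$, so $T_i$ contains a cycle, or else that terminal has degree $\ge 2$ in $T_i$; either way $T_i$ is not a full Steiner tree. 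Conversely, if $N$ has no short cycle, then for each connected component $C$ of the subgraph of $N$ induced by the Steiner points, the subgraph $T_C$ made of $C$, all edges from $C$ to terminals, and those terminals is acyclic (a cycle in $T_C$ would lie inside $C$, or use a terminal joined to $C$ by two edges together with a path in $C$, and in either case be short); since every Steiner point keeps its full degree in its $T_C$ and every terminal is a leaf of $T_C$, these trees, together with the one-edge trees on pairs of adjacent terminals, partition $E(N)$ into full Steiner trees. So it suffices to exhibit a minimum $2$-connected bottleneck $k$-Steiner network with no short cycle.

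Next I set up the extremal choice. Assume $|X|\ge 2$ (the case $|X|\le 1$ being trivial), and among all minimum $2$-connected bottleneck $k$-Steiner networks on $X$ pick one, $N$, with the fewest edges. Feasibility forces all terminals into a single block of $N$: two vertices admit two internally node-disjoint paths only if they share a block, and a family of vertices pairwise sharing blocks shares a single block. Deleting every vertex and edge outside that block leaves a network that still spans $X$, has no larger bottleneck, and uses no more Steiner points, so by minimality $N$ is itself $2$-connected; in particular it has no vertex of degree $\le 1$, so the terminal block is genuinely $2$-connected (not a single edge).

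Now suppose $N$ has a short cycle, and let $Z$ be a shortest one. A chord of $Z$ would split it into two strictly shorter cycles whose numbers of terminals sum to at most three, so one of them would be short, contradicting the choice of $Z$; hence $Z$ is chordless. Since $|Z|\ge 3$ and $Z$ meets at most one terminal, it has an edge $e=(s,s')$ whose endpoints are both Steiner points. If $N-e$ is $2$-connected it is a feasible network with one fewer edge and no larger bottleneck, contradicting minimality. Otherwise $N-e$ has a cut vertex $w$; since $N$ is $2$-connected and $e$ lies on the cycle $Z$, $w$ must lie on $Z$ and be distinct from $s$ and $s'$ (if $w$ were off $Z$ then $e$ would lie on a cycle of $N-w$ and could not be a bridge of it; and $N-e-w=N-w$ is connected when $w$ is an endpoint of $e$). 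Deleting $e$ makes it a bridge of $N-w$, so $N-w$ splits into the side $D_s$ containing $s$ and the side $D_{s'}$ containing $s'$. If one side, say $D_{s'}$, contains no terminal, then deleting it from $N$ removes at least $e$ without increasing the bottleneck and stays feasible --- any two internally node-disjoint paths between terminals that enter $D_{s'}$ must do so through $w$ and leave across $e$, and such an excursion can be rerouted along the arc of $Z$ lying in $D_s$ --- again contradicting minimality. The one remaining possibility is that \emph{every} all-Steiner edge of $Z$ is non-deletable in this way and, for each, both exposed sides carry a terminal; this must be ruled out to finish the proof.

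The main obstacle is precisely that last case. The combinatorial reformulation of the first paragraph and the reduction to a $2$-connected, edge-minimal $N$ are routine; the substance is showing that a shortest short cycle cannot be ``locked'' inside $N$, i.e.\ that some all-Steiner edge of it can be deleted --- or some terminal-free branch exposed by a cut vertex on $Z$ can be cut off --- without disconnecting a terminal pair. Establishing this requires a careful analysis of how $Z$ and its chords sit inside the $2$-connected graph $N$, of the cut vertices exposed by single-edge deletions on $Z$, and of how terminal-joining paths reroute around the branch being removed; that is where essentially all of the work lies.
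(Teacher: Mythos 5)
The paper does not actually prove this lemma --- it is imported wholesale from Luebke and Provan \cite{luebke} --- so there is no internal argument to measure you against; your proposal has to stand on its own. Its first part does: the reformulation ``$E(N)$ partitions into full Steiner trees if and only if $N$ has no cycle through at most one terminal'' is correct in both directions (the forward direction because a Steiner point's full degree forces all of its edges into one tree of the partition, the converse because the absence of short cycles makes each terminal a leaf of the tree grown from each Steiner component), and the reduction to an edge-minimal minimum network that coincides with its terminal block is routine and sound. The chordlessness of a shortest short cycle $Z$, the existence of an all-Steiner edge $e=(s,s')$ on it, and the localisation of any cut vertex $w$ of $N-e$ to $Z\setminus\{s,s'\}$ are also all correct.

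But the proof is not finished, and you say so yourself: the case in which every all-Steiner edge of $Z$ is non-deletable and, for each such edge, both components of $N-w-e$ contain a terminal, is exactly the case that carries the content of the lemma, and it is left open. This is a genuine gap, not a routine verification --- it is not even clear that edge-minimality alone is the right extremal hypothesis to kill this case (one may need vertex-minimality, a different surgery, or a stronger invariant), which is presumably why the original reference devotes real work to the structure theorem. A secondary, unacknowledged gap sits inside the case you do treat: when $D_{s'}$ is terminal-free you reroute an excursion of one path along the arc of $Z$ from $w$ to $s$ inside $D_s$, but that arc may share internal vertices with the second, supposedly internally disjoint, path (or with the remainder of the first), so $2$-connectivity of the terminal pairs in $N-D_{s'}$ is not established; one needs something like ``$w$ and $s$ lie on a common cycle of $N-D_{s'}$,'' which can fail, for instance, when $w$ has a single neighbour in $D_s$. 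Both holes need to be closed before this can replace the citation.
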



\begin{figure}[htb]
\begin{center}
\includegraphics[width=6cm]{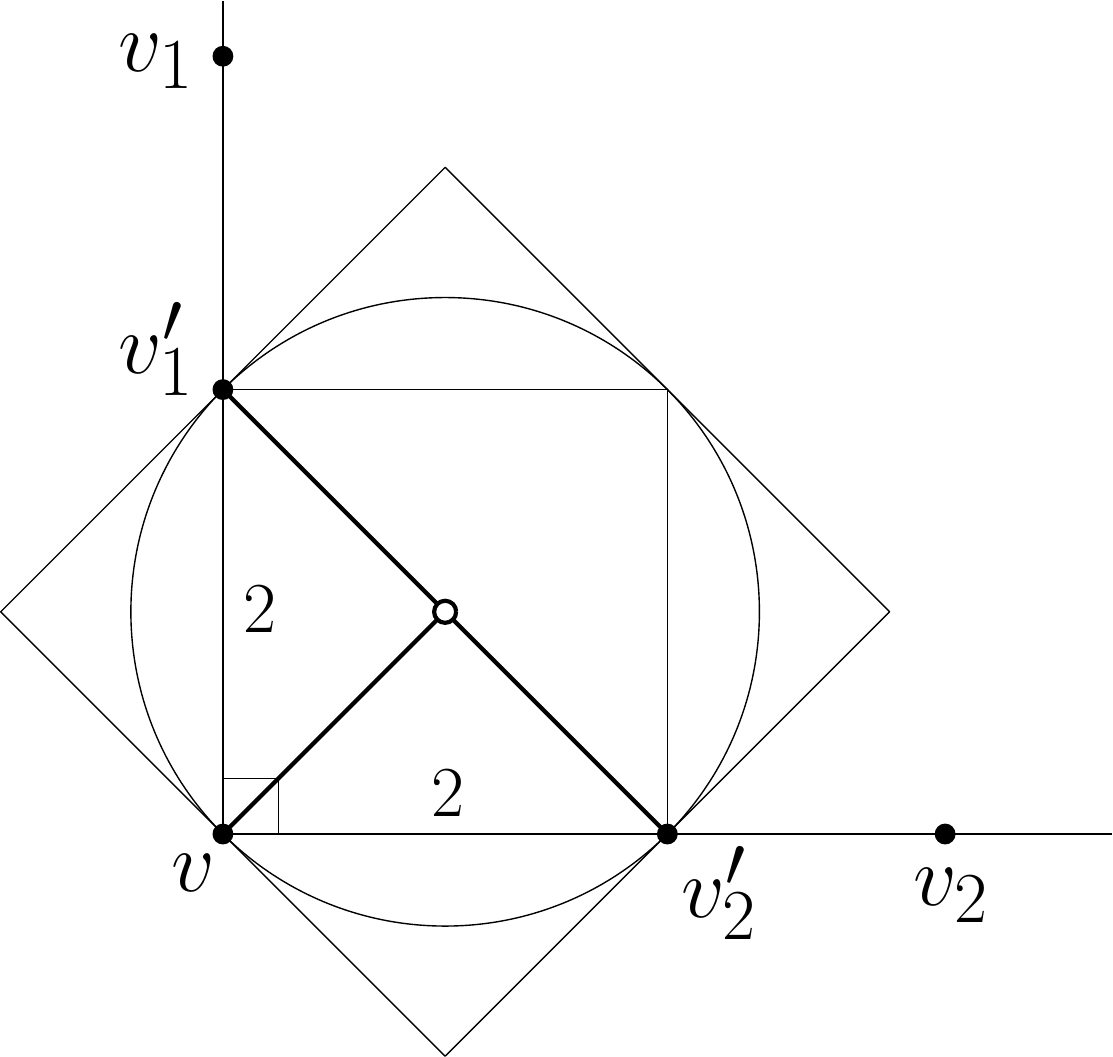}
\caption{Illustration for the proof of Lemma \ref{mainLem}}
\label{figPRatio}
\end{center}
\end{figure}

The following lemma will be used in our main proof:

\begin{lemma}\label{mainLem}Let $v,v_1,v_2$ be three points in the plane such that $v_1{v}$ is parallel to the $y$-axis, $v_2v$ is parallel to the $x$-axis, $\|v-v_1\|_p\geq 2$ and $\|v-v_2\|_p\geq 2$. Let $T$ be a full Steiner tree of minimum bottleneck length on $v,v_1,v_2$ such that $T$ contains a single Steiner point. Then the length of the bottleneck edge in $T$ is at least $2^{\frac{1}{p}}$.
\end{lemma}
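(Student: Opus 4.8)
The plan is to reduce the statement to an elementary inequality about $p$-th powers. After translating $v$ to the origin and, if necessary, reflecting the plane in the coordinate axes, I may assume $v=(0,0)$, $v_1=(0,a)$ and $v_2=(b,0)$ with $a=\|v-v_1\|_p\ge 2$ and $b=\|v-v_2\|_p\ge 2$. A full Steiner tree on three terminals containing exactly one Steiner point is a tree on four nodes (hence three edges) in which the three terminals are leaves, so the fourth node is adjacent to all of them: $T$ is the star centred at the Steiner point $s=(x,y)$, with edges $sv$, $sv_1$ and $sv_2$. Hence the bottleneck length of $T$ equals $\min_{s}\max\{\|s-v\|_p,\|s-v_1\|_p,\|s-v_2\|_p\}$, and it suffices to show that for \emph{every} point $s=(x,y)\in\mathbb{R}^2$ we have $\max\{\|s-v\|_p,\|s-v_1\|_p,\|s-v_2\|_p\}\ge 2^{1/p}$. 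So I fix an arbitrary $s=(x,y)$ and split into cases according to the sizes of $|x|$ and $|y|$.

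The easy cases come first. The triangle inequality gives $|y|+|y-a|\ge a\ge 2$, so at least one of $|y|,|y-a|$ is $\ge 1$; similarly at least one of $|x|,|x-b|$ is $\ge 1$. Suppose $|x|\ge 1$. If also $|y|\ge 1$ then $\|s-v\|_p=(|x|^p+|y|^p)^{1/p}\ge(1+1)^{1/p}=2^{1/p}$; if instead $|y-a|\ge 1$ then $\|s-v_1\|_p=(|x|^p+|y-a|^p)^{1/p}\ge 2^{1/p}$. Either way $T$ has an edge of length at least $2^{1/p}$. By the symmetric argument (exchanging the two axes, $v_1$ with $v_2$, and $a$ with $b$), the same conclusion holds whenever $|y|\ge 1$. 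Hence I may assume from now on that $|x|<1$ and $|y|<1$.

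This remaining case is the heart of the proof and the step I expect to be the main obstacle: now \emph{no single} edge need be long, and it is exactly here that the exponent $1/p$ rather than $1$ appears. The plan is to argue by contradiction, using the two ``long'' edges $sv_1$ and $sv_2$ simultaneously. Since $a,b\ge 2$ and $x,y<1$, the numbers $a-y$, $b-x$, $2-y$, $2-x$ are all positive, so $\|s-v_1\|_p^p=|x|^p+(a-y)^p\ge|x|^p+(2-y)^p$ and $\|s-v_2\|_p^p=(b-x)^p+|y|^p\ge(2-x)^p+|y|^p$. If both $\|s-v_1\|_p$ and $\|s-v_2\|_p$ were strictly less than $2^{1/p}$, adding these two estimates would give $\bigl(|x|^p+(2-x)^p\bigr)+\bigl(|y|^p+(2-y)^p\bigr)<4$. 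But $|t|^p+(2-t)^p\ge 2$ for every $t\le 2$: when $t\in[0,2]$ this is Jensen's inequality for the convex function $u\mapsto u^p$ on $[0,\infty)$ (using $p\ge 1$), and when $t<0$ the term $(2-t)^p$ already exceeds $2^p\ge 2$. Taking $t=x$ and $t=y$ (both are $<1\le 2$) therefore forces each bracketed sum to be at least $2$, contradicting that strict inequality. Hence $\max\{\|s-v_1\|_p,\|s-v_2\|_p\}\ge 2^{1/p}$, which completes the proof. The only non-routine idea here is to lower-bound the sum $\|s-v_1\|_p^p+\|s-v_2\|_p^p$ in this last case rather than any individual edge, and then invoke the convexity (power-mean) bound $|t|^p+(2-t)^p\ge 2$; everything else is the triangle inequality and monotonicity of $t\mapsto t^p$.
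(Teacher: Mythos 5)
Your proof is correct, but it takes a genuinely different route from the paper's. The paper argues geometrically: it invokes the fact that the optimal bottleneck of a one-Steiner-point star on three terminals equals the radius of their smallest enclosing $p$-circle, shrinks $v_1,v_2$ to points $v_1',v_2'$ at distance exactly $2$ from $v$ (using convexity of the circle to show this can only decrease the answer), and then identifies the smallest enclosing circle of $v,v_1',v_2'$ as the one centred at the midpoint of $v_1'v_2'$, using the $90^\circ$ rotational symmetry of the $p$-norm unit circle to check that $v$ lies inside it; this yields the radius $\tfrac{1}{2}(2^p+2^p)^{1/p}=2^{1/p}$ exactly. You instead work directly with $\min_s\max_i\|s-v_i\|_p$, split on the coordinates of the Steiner point, and in the only nontrivial case lower-bound the \emph{sum} $\|s-v_1\|_p^p+\|s-v_2\|_p^p$ by $4$ via the convexity inequality $|t|^p+(2-t)^p\ge 2$. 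Both arguments are sound; yours is more elementary and self-contained (it does not rely on the enclosing-circle characterisation, which the paper only asserts, nor on the symmetry of the unit circle), while the paper's computes the optimal bottleneck exactly for the extremal configuration and covers $p=\infty$ as a separate easy case. The only caveat is that your argument, being phrased in terms of $p$-th powers, applies only to finite $p$; that suffices for the use of the lemma in Theorem \ref{th2}, which is restricted to $1\le p<\infty$, but if you want the statement for $p=\infty$ you would need to add the (easy) limiting or direct case as the paper does.
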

\begin{proof}
Observe first that the bottleneck edge of any optimal full Steiner tree on three terminals and a single Steiner point has the same length as the radius of a smallest enclosing circle (w.r.t. the $p$-norm) of the three terminals. Let $B$ be a smallest circle such that $B$ encloses the points $v,v_1,v_2$. Since $B$ is convex, the line segments $vv_1$ and $vv_2$ lie in $B$. Therefore the points $v_1',v_2'$ lie in $B$, where, for $i\in\{1,2\}$, $v_i'$ is at a distance of exactly $2$ from $v$ and lies on segment $vv_i$. Hence the length of the bottleneck in $T'$ is no more than the length of the bottleneck in $T$, where $T'$ is an optimal full Steiner tree with a single Steiner point connecting $v,v_1',v_2'$.

The lemma clearly holds for $p=\infty$, since in this case $\|v-v_1'\|=\|v-v_2'\|=\|v_1'-v_2'\|=2$ and a smallest enclosing circle for $v,v_1',v_2'$ exists with radius $1$ and centre at the midpoint of $v_1'v_2'$. Therefore assume that $p<\infty$. Observe then that a smallest circle enclosing $v$ and $v_1'$ has its centre at the midpoint of segment $vv_1'$, and therefore does not include $v_2'$. We claim that a smallest circle enclosing $v_1'$ and $v_2'$ (note, there exists at least one such circle which is centred at the midpoint of $v_1'v_2'$) also includes the point $v$. But this follows from the $90^\circ$ rotational symmetry of the unit circle of the $p$-norm; see Figure \ref{figPRatio}. Therefore the radius of a smallest enclosing circle of $v,v_1',v_2'$ is $\frac{1}{2}(2^p+2^p)^\frac{1}{p}=2^\frac{1}{p}$.
\end{proof}

\section{Approximability analysis}
We show that it is NP-hard to approximate the $2$-connected $k$-bottleneck Steiner network problem to within a ratio smaller than ${2}^\frac{1}{p}$ when $1\leq p< \infty$. The reduction is from the following NP-complete problem \cite{ans}.

{\textsc{Hamiltonian cycle in $2$-connected, cubic, bipartite planar graphs}\\
	\indent {\textbf{Instance:}} A $2$-connected, cubic, bipartite planar graph $G$.\\
  \indent {\textbf{Question:}} Does $G$ contain a Hamiltonian cycle?

\begin{theorem}
\label{th2}
  It is \emph{NP}-hard to approximate the $2$-connected $k$-bottleneck Steiner network problem to within a ratio smaller than ${2}^\frac{1}{p}$ when $1\leq p< \infty$.
\end{theorem}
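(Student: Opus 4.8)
The plan is to give a polynomial reduction from \textsc{Hamiltonian cycle in $2$-connected, cubic, bipartite planar graphs}, engineered so that a ``yes''-instance $G$ (on $n$ vertices, hence $3n/2$ edges) yields a terminal set $X$ and a budget $k$ admitting a $2$-connected network of bottleneck at most $1$, while a ``no''-instance forces bottleneck at least $2^{1/p}$; since any polynomial-time approximation of ratio smaller than $2^{1/p}$ would then decide Hamiltonicity, the theorem follows. Concretely: compute in polynomial time a rectilinear grid drawing of $G$ --- available because $\Delta(G)\le 3$ --- and normalise it, padding edges with short detours so that all edges of $G$ are drawn with a common length and no straight run exceeds length $2$; the bipartition of $G$ is used to fix parities so that this normalisation is consistent. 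Thus each edge becomes a chain of axis-parallel ``$L$-units.'' Place a terminal at every vertex of $G$ and at the grid points along the edge-drawings, chosen so that consecutive terminals on a drawn edge lie at $\ell_p$-distance exactly $2$. For bottleneck at most $1$ the cheapest way to realise a chosen sub-drawing is to ``bead'' each length-$2$ gap by its midpoint, one Steiner point per gap; calibrate $k$ to the number of gaps needed to bead a Hamiltonian cycle's worth of edge-drawings together with ears absorbing the remaining terminals, so that $2$-connected bottleneck-$1$ networks on $X$ correspond precisely to realisations of subdivided Hamiltonian cycles of $G$.

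For the forward direction, a Hamiltonian cycle $H$ of $G$ gives such a network directly: bead all edge-drawings of $H$ and attach the remaining edge-drawings (and any stray terminals) as beaded ears. Since a connected $2$-regular graph is $2$-connected and ear-additions preserve $2$-connectivity, the result is a $2$-connected network of bottleneck $1$ which, by the choice of $k$, uses at most $k$ Steiner points.

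For the reverse direction, suppose $N$ is a $2$-connected network on $X$ using at most $k$ Steiner points with bottleneck strictly below $2^{1/p}$. By Lemma~\ref{lemLu} we may assume $N$'s edge-set partitions into full Steiner trees. Because no straight run in the drawing exceeds length $2$, below the threshold $2^{1/p}$ one cannot ``save'' Steiner points on any gap (a length-$2$ gap still needs exactly one), so the budget $k$ is spent at the rate of one Steiner point per beaded gap; a counting argument then forces $N$ to bead only enough gaps to realise a spanning subgraph of $G$ in which every vertex has degree at most $2$. Moreover, at a vertex $v$ of $G$ the ``first gaps'' of two incident edges, together with $v$, form the perpendicular configuration of Lemma~\ref{mainLem} (distances exactly $2$), so a full Steiner tree using a single Steiner point there already has bottleneck at least $2^{1/p}$; hence $N$ cannot collapse a vertex into one hub Steiner point and must instead pass cleanly through each vertex it uses. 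Combined with the fact that $2$-connectivity forces every vertex-terminal to have degree at least $2$, this shows $N$ induces a $2$-regular spanning subgraph of $G$; being connected (as $N$ is), this subgraph is a single spanning cycle, i.e.\ a Hamiltonian cycle of $G$ --- contradicting the ``no''-instance assumption. Therefore every $2$-connected network on $X$ within budget has bottleneck at least $2^{1/p}$.

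The main obstacle is the quantitative heart of the reduction: calibrating $k$ and the geometry so that (i) in the ``yes'' case the Hamiltonian-cycle network genuinely fits the budget while leftover terminals remain absorbable as ears, and (ii) in the ``no'' case no ingenious global rerouting --- exploiting the freedom to place Steiner points anywhere, to use non-midpoint beads, or to trade budget between gaps --- can remain simultaneously under the budget and below the threshold $2^{1/p}$ while evading the degree/connectivity conclusion. Making ``$N$, read off the drawing, induces a subgraph of $G$'' into a rigorous statement --- so that Lemma~\ref{mainLem} applies locally at each vertex and the ``$2$-regular-and-connected-equals-Hamiltonian'' argument can be run --- is where the real work lies; the orthogonal drawing, the distance-$2$ spacing, the $L$-unit normalisation, and the bipartite parity condition are precisely the devices that pin near-optimal networks to the drawing of $G$.
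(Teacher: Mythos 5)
Your high-level strategy matches the paper's: reduce from Hamiltonian cycle in $2$-connected cubic bipartite planar graphs, embed $G$ orthogonally, use midpoint ``beading'' of length-$2$ gaps to realise bottleneck $1$ in the yes-case, and invoke Lemma~\ref{mainLem} at perpendicular junctions to push the bottleneck up to $2^{1/p}$ in the no-case. However, your gadget calibration contains a genuine flaw that breaks the reverse direction. You place terminals at $\ell_p$-distance exactly $2$ along \emph{every} edge-drawing of $G$. Since every such terminal must be spanned and must have degree at least $2$ in any $2$-connected feasible network, and since a length-$2$ gap cannot be crossed by an edge of length below $2^{1/p}$ without a Steiner point, \emph{every} gap on \emph{every} edge-drawing must be beaded just to achieve feasibility below the threshold. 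Hence your budget $k$ (``a Hamiltonian cycle's worth of edge-drawings together with ears absorbing the remaining terminals'') is forced to equal the total number of gaps over all $3n/2$ edges. But then beading the entire drawing of $G$ --- a subdivision of the $2$-connected graph $G$, hence itself $2$-connected --- yields a bottleneck-$1$ network within budget \emph{whether or not $G$ is Hamiltonian}. Your claimed counting argument (``forces $N$ to bead only enough gaps to realise a spanning subgraph in which every vertex has degree at most $2$'') therefore has no scarcity to work with: there is no mechanism forcing degree at most $2$ at the vertices of $G$, and the reduction cannot distinguish yes- from no-instances.

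The paper avoids this by treating the two sides of the bipartition asymmetrically. For each $u_i\in U$ it builds a gadget $R(u_i)$ populated with \emph{pairs of coincident terminals at unit spacing}, so that the gadget is already $2$-connected using only length-$\le 1$ terminal--terminal edges and consumes \emph{no} Steiner points; the three tips of $R(u_i)$ sit at distance exactly $2$ from the three neighbouring $W$-terminals. All Steiner-point demand is thus concentrated at the $n$ isolated $W$-terminals, each of which needs at least two Steiner points (one per disjoint path leaving it), and the budget is set to exactly $k=2n$. The pigeonhole then forces each $W$-terminal to connect to exactly two of its three neighbouring gadgets by single-Steiner-point paths (Lemma~\ref{mainLem} ruling out a one-Steiner-point hub serving both), and contracting the gadgets yields a $2$-regular connected graph, i.e.\ a Hamiltonian cycle. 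A large separation $\Delta=4n+2$ together with the bound $(2^{1/p}-\epsilon)(2n+1)<4n+2$ rules out long Steiner chains between non-adjacent gadgets --- the ``global rerouting'' concern you correctly flag but do not resolve. To repair your proof you would need to import this asymmetric device (or an equivalent source of tightness in the Steiner-point count); as written, the calibration of $k$ is self-defeating.
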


\begin{proof}
Let $G=(V, E)$ be a 2-connected, cubic, bipartite planar graph, where $V=U\cup W$ is the bipartition, and suppose that the $2$-connected bottleneck $k$-Steiner network problem has a $({2}^\frac{1}{p}-\epsilon)$-approximation algorithm $\mathcal{A}$, where $\epsilon>0$. Let $n=|V|/2$. We construct a set $X$ of terminal points in the plane such $G$ has a Hamiltonian cycle if and only if $\mathcal{A}$ produces a $2$-connected network $N(\mathcal{A})$ spanning $X$ and at most $k:=2n$ Steiner points such that the longest edge in $N(\mathcal{A})$ is of length at most ${2}^\frac{1}{p}-\epsilon$.

Since $G$ is bipartite and cubic, we have that each part of the bipartition
$V=U\cup W$ has $n$ vertices and $|E|=3n$.
Let $U:=\{u_1, u_2, \ldots, u_{n}\}$ and  $W:=\{w_1, w_2, \ldots, w_{n}\}$. For each $u\in U$, let $E(u)$ be the set of three edges incident to $u$. Then  $\{E(u) : u\in U\}$ forms a partition of $E$ into triples.

The first step is to orthogonally embed $G$ in the plane. We do this by mapping each vertex of $V$ to a distinct integer point in the plane such that the minimum horizontal or vertical distance between any two parallel line segments (parts of edges of $G$) is at least $\Delta := 4n+2$. Note that such a representation of $G$ takes a polynomial amount of time to create \cite{tt} and the coordinates of $G$ are bounded by a polynomial in $n$.

\begin{figure}[htb]
\begin{center}
\includegraphics[width=9cm]{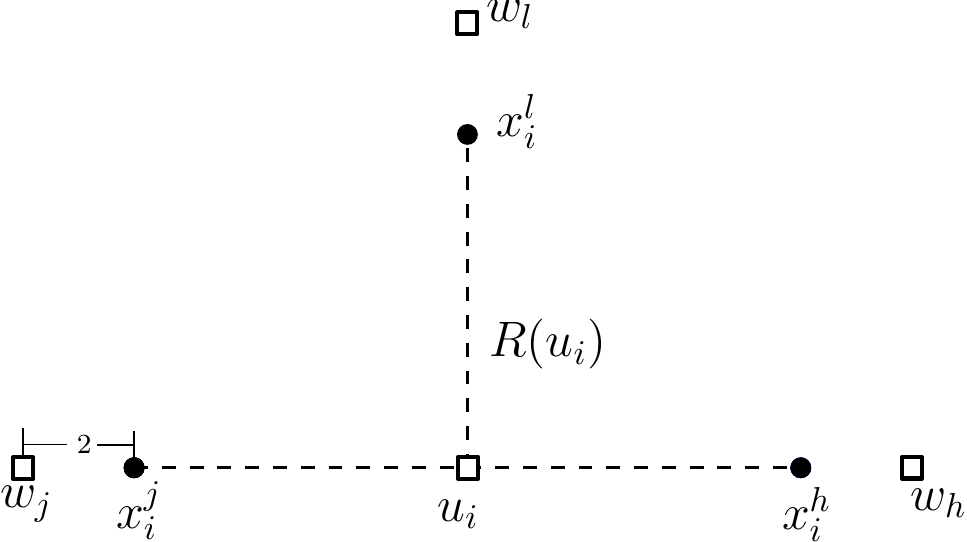}
\caption{$R(u_{i})$, represented by dashed lines, for the edge set $E(u_i)$. The black-filled circles are terminals of $N$ and are called tips of $R(u_{i})$.}
\label{fig:Rui}
\end{center}
\end{figure}

The terminal set $X$ is constructed as follows. For each $w\in W$, let $p_{w}$ be the corresponding grid point in the embedding, and call $p_{w}$ a {\em $W$-terminal}. For each $u_{i}\in U$, let $w_{j}$,  $w_{h}$ and $w_{l}$ be the three neighbours of $u_{i}$. Note that there is a grid path in $G$ connecting $u_i$ and each of  $w_{j}$,  $w_{h}$ and $w_{l}$. Place a terminal $x_{i}^s$ on the grid path between $u_i$ and $w_{s}$, for each $s\in\{j,h, l\}$, such that the distance between $x_{i}^s$ and $w_{s}$ is exactly $2$. Let $R(u_{i})$ be the union of the parts of the grid paths connecting each pair of points from $x_{i}^j$, $x_{i}^h$, $x_{i}^l$. See Figure \ref{fig:Rui} for an illustration. We call each $x_{i}^s$ a {\em tip} of $R(u_{i})$, where $s\in\{j,h, l\}$. For distinct $i, j$, $R(u_{i})$ and $R(u_{j})$ are said to be {\em adjacent} if $u_i$ and $u_j$ share a common neighbour in $G$.

\begin{figure}[htb]
\begin{center}
\includegraphics[width=9cm]{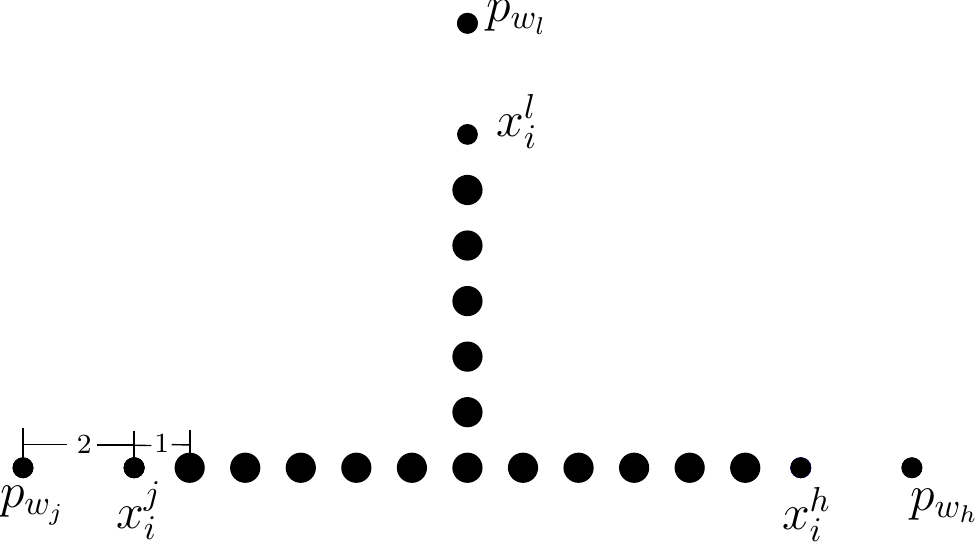}
\caption{Many terminals are placed on $R(u_i)$. Consecutive pairs of coincident terminals are at a distance of $1$ from each other. Large black-filled circles represent pairs of coincident terminals.}
\label{fig:tshape}
\end{center}
\end{figure}

Next, place two terminals on $R(u_{i})$ at a distance of exactly $1$ from each $x_{i}^s$, $s\in\{j,h,l\}$ (note that the locations of these two terminals coincide). Also, place many pairs of coincident terminals on $R(u_{i})$ so that the distance between any two pairs of consecutive coincident terminals is  $1$ (see Figure \ref{fig:tshape}). Let the set of all coincident pairs of terminals together with the three tips on $R(u_i)$ be denoted by $P(u_{i})$. Finally, let $X:=(\bigcup_{w\in W}p_{w})\cup(\bigcup_{u\in U} P(u))$.

We now prove that $G$ has a Hamiltonian cycle if and only if, using $k=2n$ Steiner points, algorithm $\mathcal{A}$ produces a $2$-connected network $N(\mathcal{A})$ on $X$ of bottleneck length at most ${2}^\frac{1}{p}-\epsilon$.


\begin{figure}[htb]
\begin{center}
\includegraphics[width=8cm]{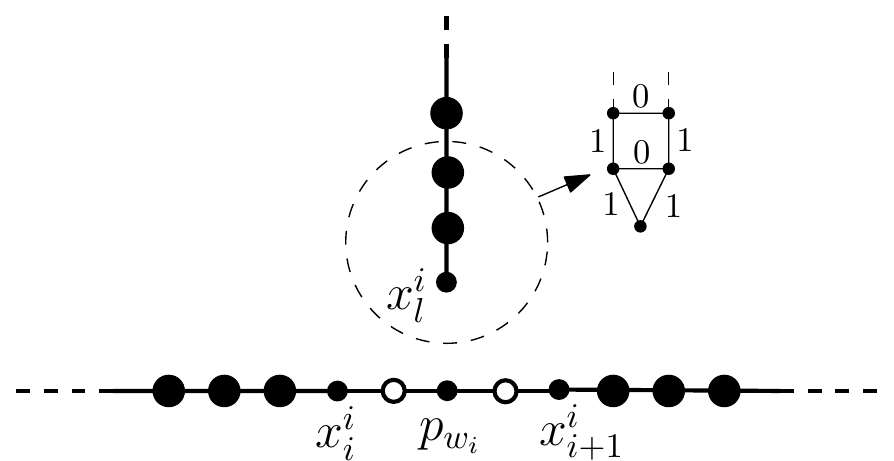}
\caption{Steiner points (white circles) placed between tip-terminals $x_{i}^i$ and  $x_{i+1}^i$. }
\label{fig:spoint}
\end{center}
\end{figure}

Suppose now that $G$ has a Hamiltonian cycle $C= u_1, w_1, u_2, w_2, \ldots, u_{n}, w_{n}, u_1$. Note that for each $1\le i \le n$,  $x_{i}^i$ and  $x_{i+1}^i$ are two tip terminals on $R(u_{i})$ and $R(u_{i+1})$ respectively, and each is at a distance of $2$ to $p_{w_i}$, where the label $n+1$ is read as $1$. Place one Steiner point at the midpoint of $x_{i}^i$ and $p_{w_{i}}$, and one Steiner point at the midpoint of $p_{w_{i}}$ and $x_{i+1}^i$. Now add edges between all terminals at distance of at most $1$ from each other (see Figure \ref{fig:spoint}). Note that the degree of every terminal is at least $2$.

Denote the resultant graph by $N^*$. Clearly the subgraph of $N^*$ induced by the terminals of $P(u_i)$ is $2$-connected (see the magnified region in Figure \ref{fig:spoint}). Also, since $C$ is a Hamiltonian cycle, if we contract every set $P(u_i)$ to a single node we obtain a cycle passing through every node. Therefore, every pair of terminals in $N^*$ lies on a common cycle. Hence, $N^*$ is $2$-connected. Finally, note that the length of a bottleneck edge in $N^*$ is at most $1$, and the total number of Steiner points added is $2\times n=k$. Therefore, since the approximation ratio of algorithm $\mathcal{A}$ is ${2}^\frac{1}{p}-\epsilon$, the length of a bottleneck edge in a network $N(\mathcal{A})$ constructed by algorithm $\mathcal{A}$ is at most $2^\frac{1}{p}-\epsilon$.

Conversely, suppose that algorithm $\mathcal{A}$ constructs a $2$-connected network $N(\mathcal{A})$ on $X$, using $2n$ Steiner points, such that the bottleneck in $N(\mathcal{A})$ is of length at most $2^\frac{1}{p}-\epsilon$. Note first that the distance between a terminal in $P(u_{i})$ and a terminal in $P(u_{j})$ is at least $2^{\frac{1}{p}+1}$ if $R(u_{i})$ and $R(u_{j})$ are adjacent (this is the smallest distance between two tips; for instance, the distance between $x_i^i$ and $x_l^i$ in Figure \ref{fig:spoint}). Also, the distance between a terminal in $P(u_{i})$ and a terminal in $P(u_{j})$ is at least $\Delta=4n+2$ if $R(u_{i})$ and $R(u_{j})$ are not adjacent; thus, since $(2^\frac{1}{p}-\epsilon)(2n+1)<4n+2$, they cannot be connected using edges of length less than $2^\frac{1}{p}$ using $k=2n$ Steiner points. Therefore no full Steiner tree of $N(\mathcal{A})$ joins terminals of distinct non-adjacent $P(u_{i})$ and $P(u_{j})$. That is, each pair of terminals belonging to the same full Steiner tree in $N(\mathcal{A})$ lie in either the same $P(u_{i})$ or in adjacent $P(u_{i})$ and $P(u_{j})$. Without loss of generality, we assume that all edges of length at most $1$ connecting terminals in the same $P(u_{i})$ are in $N(\mathcal{A})$.

Since $N(\mathcal{A})$ is $2$-connected, each $W$-terminal must be incident to at least two edges from distinct full Steiner trees (see Lemma \ref{lemLu}). We claim that, indeed, each $p_{w_j}$ is connected to exactly two tip-terminals of distinct $P(u_{i})$ by disjoint paths, each containing a single Steiner point. First note that $p_{w_j}$ is at distance of at least $2$ to any other terminal, which means whenever $p_{w_j}$ is connected to a terminal by some path then there must be at least one Steiner point lying on the path. Since there are $2n$ Steiner points in total in $N(\mathcal{A})$ and $n$ $W$-terminals, no $W$-terminal is contained in more than two distinct full Steiner trees.

Next we show that $W$-terminals cannot be connected to two terminals of the same $P(u_{i})$. This can be easily seen from the construction of $X$, since the next closest terminal to any $W$-terminal (after a tip) is at a distance of at least $3$.

Finally, we observe also that no $W$ terminal can be connected to two terminals from distinct $P(u_{i})$ using a single Steiner point. The shortest bottleneck for an full Steiner trees of this form is ${2}^\frac{1}{p}$, as proved in Lemma \ref{mainLem}. Therefore each $W$-terminal is contained in exactly two distinct full Steiner trees, both of which are paths connecting to distinct tips and each of which contains exactly one Steiner point.

Now, for the original graph $G$, form an edge set $E'\subset E$ as follows: for each Steiner point connecting two terminals $u_{i}^j$ and $p(w_{j})$, we add the edge $u_{i}w_{j}$ of $G$ to $E'$. Let the graph $C$ be obtained from $N(\mathcal{A})$ by relabelling each vertex $p_{w_j}$ as $w_j$, and contracting each $P(u_{i})$ and its two adjacent Steiner points into a single vertex $u_i$. It is not hard to see that $C$ is a cycle of length $n$ meeting each vertex of $U\cup V$ and $C$ is isomorphic to the graph induced by edges in $E'$. That is, $E'$ gives rise to a Hamiltonian cycle of $G$.
\end{proof}

As mentioned in the introduction, in the case of the $1$-connected bottleneck Steiner problem, there exists a simple $2$-approximation algorithm in the Euclidean and rectilinear norms \cite{wan} which greedily places degree-$2$ Steiner points on the longest edges of a minimum spanning tree interconnecting the given set of terminals. The question arises as to whether an analogous approximation algorithm can be designed for the $2$-connected bottleneck Steiner network problem. There are two obstacles to this potential approach: firstly, although minimum spanning trees can be constructed in polynomial time, the minimum $2$-connected spanning network problem (where Steiner points are \textit{not} allowed and network cost is measured as the sum of all edge lengths) is NP-hard in the Euclidean plane \cite{czu}. We expect the same to be true in other $p$-norms. Furthermore, as stated in the next corollary, even if we restrict the degree of Steiner points to $2$, the $2$-connected bottleneck Steiner network problem remains NP-hard.

\begin{corollary}It is NP-hard to approximate the {$2$-connected bottleneck $k$-Steiner network problem} to within a ratio smaller than $2^\frac{1}{p}$ in polynomial time, even if all Steiner points are constrained to degree $2$.
\end{corollary}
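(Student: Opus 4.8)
The plan is to observe that the reduction constructed in the proof of Theorem~\ref{th2} already respects the degree constraint, so essentially no new work is required: I would simply revisit the equivalence ``$G$ has a Hamiltonian cycle $\iff$ $\mathcal{A}$ produces a $2$-connected network on $X$ with $k=2n$ Steiner points and bottleneck at most $2^{\frac1p}-\epsilon$'' and check that it survives the extra stipulation that every Steiner point has degree exactly $2$.

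For the forward direction, I would note that the witness network $N^*$ built from a Hamiltonian cycle $C=u_1,w_1,\dots,u_n,w_n,u_1$ places Steiner points only at the midpoints of the pairs $\{x_i^i,p_{w_i}\}$ and $\{p_{w_i},x_{i+1}^i\}$; each such Steiner point is incident to exactly two edges. Thus $N^*$ already satisfies the degree-$2$ restriction, has bottleneck at most $1$, and uses exactly $2n$ Steiner points, so a degree-$2$-restricted $(2^{\frac1p}-\epsilon)$-approximation algorithm still returns a network of bottleneck strictly less than $2^{\frac1p}$.

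For the converse, I would point out that imposing the degree bound only shrinks the class of admissible networks, so it suffices to confirm that the structural analysis in the proof of Theorem~\ref{th2} never appeals to a Steiner point of degree greater than $2$. The relevant steps --- that no full Steiner tree of $N(\mathcal{A})$ can join terminals of non-adjacent $P(u_i)$ and $P(u_j)$ (a count of how many Steiner points must lie on a path of bounded-length edges), that the $2n$ Steiner points and $n$ $W$-terminals force each $p_{w_j}$ into exactly two full Steiner trees, and that Lemma~\ref{mainLem} forbids joining a $W$-terminal to two distinct tips through a single Steiner point --- are all insensitive to Steiner-point degrees, and indeed their net conclusion is precisely that each $W$-terminal is joined to two tips by internally disjoint paths carrying one Steiner point apiece, so the Steiner points are already forced to have degree~$2$. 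The recovery of a Hamiltonian cycle of $G$ from $N(\mathcal{A})$ then goes through unchanged.

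I do not expect a genuine obstacle here; the only care needed is the bookkeeping in the converse, namely verifying that every invocation of Lemma~\ref{lemLu} and of the Steiner-point count in the proof of Theorem~\ref{th2} remains valid once networks with higher-degree Steiner points are excluded from consideration.
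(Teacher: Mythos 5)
Your proposal is correct and follows essentially the same route as the paper, which likewise observes that the reduction of Theorem~\ref{th2} applies almost verbatim to the degree-$2$-restricted problem: the witness network $N^*$ already has only degree-$2$ Steiner points, and the converse analysis survives (indeed is only made easier by) the restriction. The one cosmetic difference is that the paper discards the configuration of Lemma~\ref{mainLem} outright, since a degree-$3$ Steiner point cannot occur in the restricted problem, and re-attributes the ratio $2^{\frac{1}{p}}$ to the tip-to-tip distance $2\cdot 2^{\frac{1}{p}}$ between adjacent $P(u_i)$, whereas you retain that step of the argument as still valid but vacuous.
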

\begin{proof}
Observe that the proof of Theorem \ref{th2} can be used almost verbatim for the degree-$2$ restricted case. We simply omit the case represented by Figure \ref{figPRatio}. The inapproximability ratio follows from the fact that the smallest distance between two non-consecutive terminals of $X$ is the distance between tips of two adjacent $P(u_i)$, which is $2\times 2^\frac{1}{p}$.
\end{proof}


\end{document}